\documentclass[12pt,a4paper]{amsart}

\usepackage{amsmath,amssymb}
\usepackage[margin=2cm]{geometry}
\usepackage{hyperref}

\newtheorem{theorem}{Theorem}
\newtheorem{lemma}[theorem]{Lemma}
\newtheorem{proposition}[theorem]{Proposition}
\newtheorem{corollary}[theorem]{Corollary}

\theoremstyle{definition}
\newtheorem{definition}[theorem]{Definition}
\newtheorem{example}[theorem]{Example}
\newtheorem{remark}[theorem]{Remark}

\title[A semigroup-theoretic linkage theory for relative ideals]{A semigroup-theoretic linkage theory for relative ideals: principal and canonical links}

\author{I. Ojeda}
\address{Departamento de Matem\'aticas, Universidad de Extremadura, 06071 Badajoz, Spain}
\email{ojedamc@unex.es}

\begin{document}

\begin{abstract}
We develop a semigroup-theoretic analogue of liaison for relative ideals of a numerical semigroup. Two parallel linkage notions are proposed: a theory based on translates of the semigroup and a theory based on translates of the canonical ideal.
\end{abstract}

\thanks{The author is partially supported by project PID2022-138906NB-C21, funded by MCIN/AEI/10.13039/501100011033 and the European Union NextGenerationEU/PRTR, by grant GR24068, funded by the Junta de Extremadura and co-funded by the European Regional Development Fund (ERDF)}

\keywords{Numerical semigroup, Relative ideal, Liaison theory,
Canonical ideal, Duality, Semigroup linkage}

\subjclass[2020]{20M14, 13A30, 13H10}

\maketitle

\section{Introduction}

Classical liaison theory relates unmixed ideals in a polynomial ring, or more generally in a Cohen--Macaulay ambient ring, via residual constructions of the form $I:J$, typically with respect to complete intersection or Gorenstein linking ideals; see the expository notes of Migliore and Nagel \cite{MN02} for background. In that setting, liaison classes often exhibit a rich and subtle structure, and even liaison plays a central role.

In the context of numerical semigroups, however, it is natural to work intrinsically at the level of the semigroup of values, where the objects involved are combinatorial and monomial by nature. The purpose of this paper is to introduce a semigroup-theoretic framework inspired by liaison theory, formulated entirely within the poset of relative ideals of a numerical semigroup $S$, and to analyze the extent to which liaison-type phenomena persist in this much more rigid setting.

Our approach is based on the semigroup-theoretic colon operation
\[
H-K \;=\;\{\,z\in\mathbb{Z}\mid z+K\subseteq H\,\},
\]
which plays the role of an ideal quotient and is standard in the theory of numerical semigroups and commutative monoids \cite{RG09}. Given a family $\mathcal L$ of relative ideals of $S$, closed under translation, this operation allows us to define direct linkage and liaison classes for relative ideals of $S$.

We focus on two natural choices of linking families. The first one consists of translates of the principal relative ideal $S$, leading to what we call principal linkage. The second one consists of translates of the canonical relative ideal $K(S)$, yielding canonical linkage. Both choices are motivated by classical constructions: principal linkage mirrors, at a formal level, linkage by complete intersections, while canonical linkage reflects the distinguished role of the canonical ideal in the theory of numerical semigroups, where it governs symmetry and almost symmetry phenomena \cite{GSO19,RG09}.

From a broader perspective, the constructions considered here are naturally related to classical duality phenomena. The colon operation on relative ideals is formally analogous to the duality of fractional ideals, while the prominent role of the canonical ideal reflects familiar features of Gorenstein and almost Gorenstein numerical semigroups. More abstractly, the residuation $H-K$ fits into the general framework of ordered and residuated commutative monoids.

A key outcome of our analysis is that, once these two linkage notions are fixed, the semigroup-theoretic framework turns out to be highly rigid. Both principal and canonical linkage are governed by duality operators, together with the corresponding reflexivity conditions, and as a consequence the corresponding liaison classes collapse in a systematic way: in each of the two settings they consist of at most two elements modulo translation and can be completely classified. In particular, even liaison is trivial in both the principal and the canonical theories. Therefore, much of semigroup liaison can be understood in terms of these duality operators and the corresponding reflexivity conditions.

Unless otherwise stated, all computations in the examples were performed with the aid of the \texttt{GAP} \cite{GAP} package \texttt{NumericalSgps} \cite{numericalsgps}.

\section{Preliminaries}

Throughout, $\mathbb{N}=\{0,1,2,\dots\}$ and $\mathbb{Z}$ denote the nonnegative integers and the integers, respectively. A numerical semigroup is a submonoid $S\subseteq (\mathbb{N},+)$ with finite complement in $\mathbb{N}$, and we write $\operatorname{F}(S)=\max(\mathbb{N}\setminus S)$ for the Frobenius number of $S$. If $S=\mathbb{N}$, we adopt the convention $\operatorname{F}(\mathbb{N})=-1$.

\begin{definition}
A relative ideal of $S$ is a nonempty subset $I\subseteq \mathbb{Z}$ such that
\begin{enumerate}
\item $I+S\subseteq I$, and
\item there exists $a\in S$ such that $a+I\subseteq S$.
\end{enumerate}
If, in addition, $I\subseteq S$, then $I$ is called an ideal of $S$.
\end{definition}

If $I\subseteq S$, then $I$ is additively closed; moreover, $I$ is a submonoid of $(S,+)$ if and only if $0\in I$, and in this case $S$ is an overmonoid of $I$.

Relative ideals are the natural semigroup-theoretic counterparts of fractional ideals in the semigroup-ring viewpoint; see, for instance, \cite[Chapter~1, Exercise~2.13]{RG09} and \cite[Chapter~2, \S\S2.1--2.2]{HK25}. 

For $A,B\subseteq \mathbb{Z}$ we set $A+B=\{a+b:\ a\in A,\ b\in B\}$ and, for $t\in\mathbb{Z}$, $A+t=\{a+t:\ a\in A\}$.

Let $H,K\subseteq \mathbb{Z}$ be relative ideals of $S$. We define the semigroup-theoretic colon (or difference) by $H-K \;:=\; \{\, z\in \mathbb{Z}\mid z+K\subseteq H \,\}$. This is the natural analogue of residuation (colon) in commutative monoids \cite[Chapter~2, Definition and Theorem~2.1.7]{HK25}.

The following elementary facts are standard; we record them (with brief proofs) for completeness and to keep the exposition self-contained.

\begin{lemma}\label{lem:colon-basic}
Let $H,K,L$ be relative ideals of $S$.
\begin{enumerate}
\item $H-K$ is a relative ideal of $S$.
\item If $K\subseteq L$, then $H-L\subseteq H-K$.
\item For every $t\in \mathbb{Z}$ we have $(H+t)-(K+t)=H-K$.
\item $K\subseteq H-(H-K)$.
\end{enumerate}
\end{lemma}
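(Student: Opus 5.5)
The plan is to check each item directly from the definitions, working with elements of $\mathbb{Z}$. Item (1) is the only one that needs any care; items (2)--(4) are one-line set manipulations.

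For (1) we must verify three things about $H-K$: it is nonempty, it satisfies $(H-K)+S\subseteq H-K$, and some element of $S$ translates it into $S$.
\begin{itemize}
\item \emph{Closure under $S$.} Take $z\in H-K$ and $s\in S$. Then $z+s+K=s+(z+K)\subseteq s+H\subseteq H$, since $H+S\subseteq H$. Hence $z+s\in H-K$.
\item \emph{Nonemptiness.} Choose $a\in S$ with $a+K\subseteq S$, and pick any $h\in H$. Then $h+a+K\subseteq h+S\subseteq H$, so $h+a\in H-K$.
\item \emph{Bounded below.} Fix $k_0\in K$ and $b\in S$ with $b+H\subseteq S$. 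For $z\in H-K$ we have $z+k_0\in H$, so $b+k_0+z\in S$. Thus $(b+k_0)+(H-K)\subseteq S$.
\end{itemize}
The translating element must lie in $S$, but $b+k_0$ may be negative or a gap of $S$. This is the only real obstacle. Since $S$ contains every integer larger than $\operatorname{F}(S)$, we choose $c\in S$ with $c+b+k_0\ge 0$, and replace the translate by $c+b+k_0$ after adjusting to land in $S$. More precisely:
\begin{itemize}
\item It suffices to take $a'=c+b+k_0$ for some $c\in S$ making $a'\in S$. Any sufficiently large $c$ works, because then $a'>\operatorname{F}(S)$.
\item Then $a'+(H-K)=c+\bigl(b+k_0+(H-K)\bigr)\subseteq c+S\subseteq S$.
\end{itemize}

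For (2), if $z+L\subseteq H$ and $K\subseteq L$, then $z+K\subseteq z+L\subseteq H$, so $H-L\subseteq H-K$.

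For (3), $z+(K+t)\subseteq H+t$ holds if and only if $z+K\subseteq H$, because translation by $-t$ is a bijection of $\mathbb{Z}$. Hence $(H+t)-(K+t)=H-K$.

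For (4), let $k\in K$ and $w\in H-K$. Then $k+w\in w+K\subseteq H$. Therefore $k+(H-K)\subseteq H$, which means $k\in H-(H-K)$, and so $K\subseteq H-(H-K)$.
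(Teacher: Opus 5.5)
Your proof is correct and follows the paper's argument essentially step for step. Nonemptiness via $h+a$ with $a+K\subseteq S$, $S$-stability, and the inclusion $(b+k_0)+(H-K)\subseteq S$ all match, and (2)--(4) are handled identically.

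The only difference is how you force the translating element into $S$. You add a large $c\in S$ and use that $S$ contains every integer above $\operatorname{F}(S)$. The paper instead adds $c\in S$ with $c+K\subseteq S$, so that $c+k_0\in S$ and hence $b+c+k_0\in S$ by additive closure alone; this avoids cofiniteness of $S$.
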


\begin{proof}
(2) and (3) are immediate from the definition. For (4), if $x\in K$ and $z\in H-K$, then $z+K\subseteq H$ implies $z+x\in H$, hence $x\in H-(H-K)$. For (1), we first show that $H-K\neq\varnothing$. Choose $h\in H$. Since $K$ is a relative ideal, there exists $b\in S$ such that $b+K\subseteq S$. Then $(h+b)+K=h+(b+K)\subseteq h+S\subseteq H$, so $h+b\in H-K$. Now let $z\in H-K$ and $s\in S$. Then $(z+s)+K = s+(z+K)\subseteq s+H\subseteq H$, so $z+s\in H-K$. Moreover, choose $a\in S$ such that $a+H\subseteq S$. If $z\in H-K$, then $z+K\subseteq H$, and therefore
\[
(a+z)+K \subseteq a+H \subseteq S.
\]
Since $K$ is a relative ideal, there exists $c\in S$ with $c+K\subseteq S$. Fix $k\in K$. Then $(a+z)+k\in S$ and $c+k\in S$. Therefore $(a+c+k)+z=c+((a+z)+k)\in S$. Since $a\in S$ and $c+k\in S$, we have $a+c+k\in S$. Hence $(a+c+k)+(H-K)\subseteq S$, proving that $H-K$ is a relative ideal of $S$.
\end{proof}

Given a relative ideal $L$, we call $L-I$ the residual of $I$ with respect to $L$.

\begin{definition}
Let $L$ and $I$ be relative ideals of $S$. The residual of $I$ with respect to $L$ is
\[
\mathrm{Res}_L(I) \;:=\; L-I.
\]
\end{definition}

Relative ideals are naturally considered up to translation. We say that two relative ideals $I,J$ of $S$ are isomorphic if there exists $t\in \mathbb{Z}$ such that $I=J+t$.

\begin{definition}\label{def:normalized}
Let $I$ be a relative ideal of $S$. Since $S\subseteq\mathbb N$ has finite complement and $I$ is bounded below and stable under addition by $S$, the set $I$ admits a minimum element. We define the normalization of $I$ by
\[
I^{\mathrm{nor}} := I-\min(I).
\]
We say that $I$ is normalized if $\min(I)=0$.
\end{definition}

\begin{proposition}\label{prop:N-relative-ideals}
Assume $S=\mathbb{N}$. Then:
\begin{enumerate}
\item Every relative ideal $I$ of $S$ is of the form $I=m+\mathbb{N}$ for a unique $m\in\mathbb{Z}$.
\item If $I=m+\mathbb{N}$ and $J=n+\mathbb{N}$, then
\[
I-J=(m-n)+\mathbb{N}.
\]
\end{enumerate}
In particular, the normalization of any relative ideal equals $\mathbb{N}$.
\end{proposition}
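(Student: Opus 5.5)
For part (1), I would put $m:=\min(I)$; this minimum exists by the remark in Definition~\ref{def:normalized}, since $I$ is nonempty and bounded below. To see that $I$ is bounded below directly: choose $a\in\mathbb{N}$ with $a+I\subseteq\mathbb{N}$, so every $x\in I$ satisfies $x\ge -a$.

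I would then prove the two inclusions. Because $m\in I$ and $I+\mathbb{N}\subseteq I$, we get $m+\mathbb{N}\subseteq I$. Conversely, every $x\in I$ satisfies $x\ge m$, so $x=m+(x-m)$ with $x-m\in\mathbb{N}$, which gives $I\subseteq m+\mathbb{N}$. Uniqueness holds because $m=\min(m+\mathbb{N})$ is determined by the set itself.

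For part (2), I would apply the definition of the colon directly: $z\in I-J$ if and only if $z+n+\mathbb{N}\subseteq m+\mathbb{N}$. Since $z+n+\mathbb{N}$ is an upward ray starting at $z+n$, the inclusion holds if and only if its minimum lies in $m+\mathbb{N}$, that is, $z+n\ge m$, that is, $z\ge m-n$. Hence $I-J=(m-n)+\mathbb{N}$. Alternatively, Lemma~\ref{lem:colon-basic}(3) reduces the computation to $(m-n+\mathbb{N})-\mathbb{N}$, which is immediate.

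The final claim follows from (1): $I^{\mathrm{nor}}=I-\min(I)=(m+\mathbb{N})-m=\mathbb{N}$. Here $-m$ means translation by $-m$, not the colon operation, and I would state this explicitly to avoid ambiguity in the notation. No step is a genuine obstacle. The only point needing care is justifying the existence of the minimum, which comes from condition (2) in the definition of a relative ideal.
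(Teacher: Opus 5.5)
Your proposal is correct and follows essentially the same argument as the paper: take $m=\min(I)$ and use $I+\mathbb{N}\subseteq I$ to get $I=m+\mathbb{N}$, compute the colon via the equivalence $z+n+\mathbb{N}\subseteq m+\mathbb{N}\iff z+n\ge m$, and read off $I^{\mathrm{nor}}=\mathbb{N}$ from (1). Your explicit derivation of the lower bound $x\ge -a$ from condition (2) of the definition is a small improvement, since the paper only appeals to the remark in Definition~\ref{def:normalized}.
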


\begin{proof}
(1) Let $I$ be a relative ideal of $\mathbb{N}$. The condition $I+\mathbb{N}\subseteq I$ implies that whenever $x\in I$ then $x+t\in I$ for all $t\ge 0$. Since $I$ has a minimum element $m=\min(I)$, it follows that $I=m+\mathbb{N}$. Uniqueness of $m$ is clear.

(2) Let $I=m+\mathbb{N}$ and $J=n+\mathbb{N}$. By definition, $z\in I-J$ if and only if $z+J\subseteq I$, i.e. $z+n+\mathbb{N}\subseteq m+\mathbb{N}$, which is equivalent to $z+n\ge m$. Hence $I-J=(m-n)+\mathbb{N}$.

Finally, by Definition~\ref{def:normalized}, if $I=m+\mathbb{N}$ then $I^{\mathrm{nor}}=I-\min(I)=\mathbb{N}$.
\end{proof}

Note that $I^{\mathrm{nor}}\subseteq \mathbb{N}$ always holds, but $I^{\mathrm{nor}}$ need not be contained in $S$ in general.

\begin{lemma}
Let $I$ be a relative ideal of $S$.
\begin{enumerate}
\item $I^{\mathrm{nor}}$ is a normalized relative ideal of $S$.
\item $I$ and $J$ are isomorphic (i.e.\ $I=J+t$ for some $t\in\mathbb{Z}$) if and only if $I^{\mathrm{nor}}=J^{\mathrm{nor}}$.
\end{enumerate}
\end{lemma}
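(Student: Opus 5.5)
For part (1), I will argue directly from the definition. The key observation is that translation preserves both relative-ideal axioms. Put $m=\min(I)$, which exists by Definition~\ref{def:normalized}, and let $I^{\mathrm{nor}}=I+(-m)$.

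First, $I^{\mathrm{nor}}$ is nonempty because $I$ is. Second, it is stable under $S$: $(I-m)+S=(I+S)-m\subseteq I-m$. Third, for the bounding axiom, take $a\in S$ with $a+I\subseteq S$, and put $c:=a+m'$, where $m'\in S$ is chosen with $m'\ge -m$ and $m'\ge F(S)+1$. The intended estimate is
\[
c+(I-m)=a+I+(m'-m).
\]
Here $a+I\subseteq S\subseteq\mathbb N$. Also $m'-m\ge m'+(-m)$; more simply, every element of $I-m$ is $\ge 0$, since $m$ is the minimum of $I$.

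So I will streamline the third step. Choose $c\in S$ with $c>F(S)$. Then $c+(I-m)\subseteq c+\mathbb N\subseteq S$. This settles the bounding axiom without using $a$ at all.

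Normalization is then immediate. We have $\min(I-m)=\min(I)-m=0$, because translation preserves order and hence commutes with taking minima.

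For part (2), I will handle the two directions separately.

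Suppose $I=J+t$. Then $\min(I)=\min(J)+t$. Hence
\[
I^{\mathrm{nor}}=J+t-\min(J)-t=J^{\mathrm{nor}}.
\]

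Conversely, suppose $I^{\mathrm{nor}}=J^{\mathrm{nor}}$. Then $I-\min(I)=J-\min(J)$, and so
\[
I=J+(\min(I)-\min(J)),
\]
which shows $I$ and $J$ are isomorphic with $t=\min(I)-\min(J)$.

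One point of notation needs care. Here $I-\min(I)$ means the translate $I+(-\min(I))$, and not the colon operation $H-K$. With that reading, all the identities above are routine manipulations of translates.

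I expect no real obstacle. The only point needing care is the bounding condition in (1), and the choice $c>F(S)$ disposes of it.
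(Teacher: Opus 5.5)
Your proof is correct and follows the paper's argument: part (2) is the same as the paper's, and for part (1) you make explicit the paper's one-line remark that translation preserves relative ideals. Your witness $c\in S$ with $c>\operatorname{F}(S)$, together with $I^{\mathrm{nor}}\subseteq\mathbb{N}$, handles the bounding axiom cleanly. In a final write-up, delete the abandoned first attempt with $c=a+m'$, which as stated is incomplete: for instance, $m'-m$ need not lie in $S$ when $m>0$.
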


\begin{proof}
By definition, $\min(I^{\mathrm{nor}})=0$, and translation preserves the property of being a relative ideal, hence (1). For (2), if $I=J+t$ then $\min(I)=\min(J)+t$, hence $I-\min(I)=J-\min(J)$. Conversely, if $I^{\mathrm{nor}}=J^{\mathrm{nor}}$, then $I=J+(\min(I)-\min(J))$.
\end{proof}

The operation $H-K$ is compatible with normalization and translations by Lemma~\ref{lem:colon-basic}(3), so that many constructions are naturally studied on normalized representatives.

We will illustrate these constructions with explicit examples once linkage is introduced.

\section{Linkage and liaison classes}

Fix a numerical semigroup $S$ and let $\mathcal{L}$ be a nonempty family of relative ideals of $S$. Throughout this section we assume that $\mathcal{L}$ is closed under translation, that is, $L\in\mathcal{L}$ implies $L+t\in\mathcal{L}$ for all $t\in\mathbb{Z}$. We refer to \cite{MN02} for the classical liaison terminology in the ring-theoretic setting.

\begin{definition}
Let $I,J,L$ be relative ideals of $S$. We say that $I$ and $J$ are directly linked by $L$ if \[
J=L-I \quad\text{and}\quad I=L-J.
\]
If $\mathcal{L}$ is a family of relative ideals of $S$, we say that $I$ and $J$ are directly $\mathcal{L}$-linked if they are directly linked by some $L\in\mathcal{L}$.
\end{definition}

In the classical ring-theoretic setting, direct linkage is often formulated under additional hypotheses. In the present semigroup-theoretic framework we do not impose any such extra conditions: the equalities $J=L-I$ and $I=L-J$ already encode the symmetry of direct linkage.

\begin{definition}
We say that \(I\) and \(J\) belong to the same \(\mathcal L\)-liaison class if there exist an integer \(r\ge 0\), relative ideals \(I_0,\dots,I_r\), and \(L_1,\dots,L_r\in\mathcal L\) such that \(I=I_0\), \(J=I_r\), and \(I_{k-1}\) and \(I_k\) are directly linked by \(L_k\) for \(k=1,\dots,r\). We say that \(I\) and \(J\) belong to the same even \(\mathcal L\)-liaison class if such a chain exists with \(r\) even.
\end{definition}

\begin{proposition}
Assume $S=\mathbb{N}$ and let $\mathcal{L}$ be a nonempty family of relative ideals of $S$ (closed under translation, as in this section). Then $\mathcal{L}=\{\,a+\mathbb{N}\mid a\in\mathbb{Z}\,\}$ and any two relative ideals are directly $\mathcal{L}$-linked. Consequently, there is a single $\mathcal{L}$-liaison class and a single even
$\mathcal{L}$-liaison class modulo translation.
\end{proposition}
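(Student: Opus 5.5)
The plan rests entirely on Proposition~\ref{prop:N-relative-ideals}. First, I will identify $\mathcal{L}$. By Proposition~\ref{prop:N-relative-ideals}(1), every relative ideal of $\mathbb{N}$ has the form $a+\mathbb{N}$, so $\mathcal{L}\subseteq\{a+\mathbb{N}\mid a\in\mathbb{Z}\}$. For the reverse inclusion, note that $\mathcal{L}$ is nonempty, so we may pick some $L=m+\mathbb{N}\in\mathcal{L}$. Closure under translation then gives $L+(a-m)=a+\mathbb{N}\in\mathcal{L}$ for every $a\in\mathbb{Z}$, which yields equality.

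Next, I will show that any two relative ideals are directly linked. Let $I=m+\mathbb{N}$ and $J=n+\mathbb{N}$, and choose $L=(m+n)+\mathbb{N}\in\mathcal{L}$. By Proposition~\ref{prop:N-relative-ideals}(2),
\[
L-I=((m+n)-m)+\mathbb{N}=n+\mathbb{N}=J
\quad\text{and}\quad
L-J=m+\mathbb{N}=I,
\]
so $I$ and $J$ are directly linked by $L$.

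For the consequences, direct linkage of every pair (with $r=1$) shows that there is a single $\mathcal{L}$-liaison class. For even liaison, I will use a chain $I\to I\to J$: link $I$ to itself with $L=2m+\mathbb{N}$, then link $I$ to $J$ as above. This gives $r=2$. Alternatively, $r=0$ already works up to translation, since all relative ideals normalize to $\mathbb{N}$ by Proposition~\ref{prop:N-relative-ideals}. In either case there is a single even class, even before passing to translation.

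There is no real obstacle here. The only subtle point is the reverse inclusion for $\mathcal{L}$, which uses both nonemptiness and translation-closedness.
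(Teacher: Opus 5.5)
Your proof is correct and follows essentially the same route as the paper's. You identify $\mathcal{L}$ via Proposition~\ref{prop:N-relative-ideals}(1) together with nonemptiness and translation-closure, and you link $I=m+\mathbb{N}$ and $J=n+\mathbb{N}$ by $(m+n)+\mathbb{N}$ using part (2). Your explicit even chain $I\to I\to J$, which uses the self-link of $I$ by $2m+\mathbb{N}$, fills in the even-liaison claim that the paper leaves implicit. One small correction: your $r=0$ alternative only works modulo translation, so your closing phrase ``in either case \dots even before passing to translation'' should refer to the first chain alone.
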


\begin{proof}
By Proposition~\ref{prop:N-relative-ideals}(1), every relative ideal of $\mathbb{N}$ is a translate of $\mathbb{N}$. Since $\mathcal{L}$ is nonempty, it contains some $a+\mathbb{N}$, and translation-closure yields $(a+t)+\mathbb{N}\in\mathcal{L}$ for all $t\in\mathbb{Z}$. Hence $\mathcal{L}$ contains all translates of $\mathbb{N}$, so $\mathcal{L}=\{a+\mathbb{N}\mid a\in\mathbb{Z}\}$.

Now let $I=m+\mathbb{N}$ and $J=n+\mathbb{N}$. Take $L=(m+n)+\mathbb{N}\in\mathcal{L}$. Using Proposition~\ref{prop:N-relative-ideals}(2), we get $L-I=n+\mathbb{N}=J$ and $L-J=m+\mathbb{N}=I$, so $I$ and $J$ are directly $\mathcal{L}$-linked. The statement about liaison and even liaison follows.
\end{proof}

This proposition shows that the numerical semigroup $\mathbb{N}$ represents the universal trivial case for semigroup-theoretic liaison. In this setting, the colon operation does not generate any nontrivial structure, and all relative ideals lie in a single liaison class modulo translation. For this reason, throughout the paper we restrict to numerical semigroups $S\neq\mathbb{N}$.

\begin{proposition}\label{prop:translation-link}
Let $t\in\mathbb{Z}$ and let $I,J,L$ be relative ideals of $S$. If $I$ and $J$ are directly linked by $L$, then $I+t$ and $J+t$ are directly linked by $L+2t$. Consequently, $\mathcal{L}$-liaison and even $\mathcal{L}$-liaison are invariant under translation.
\end{proposition}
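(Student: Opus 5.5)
The plan is to reduce everything to a single translation identity for the colon operation:
\[
(H+a)-(K+b) \;=\; (H-K)+(a-b)
\]
for relative ideals $H,K$ of $S$ and integers $a,b$. This generalizes Lemma~\ref{lem:colon-basic}(3), which is the case $a=b$. I will verify it straight from the definition. We have $z+(K+b)\subseteq H+a$ if and only if $(z+b-a)+K\subseteq H$, that is, if and only if $z-(a-b)\in H-K$. Hence $z\in (H+a)-(K+b)$ exactly when $z\in (H-K)+(a-b)$. No relative-ideal axioms are needed here beyond the definition of the colon. I will also note that translates of relative ideals are again relative ideals, as already used in the normalization lemma, so $I+t$, $J+t$ and $L+2t$ are legitimate objects.

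With this identity the first claim is immediate. Suppose $J=L-I$ and $I=L-J$. Taking $a=2t$ and $b=t$ gives
\[
(L+2t)-(I+t)=(L-I)+t=J+t,
\]
and symmetrically
\[
(L+2t)-(J+t)=(L-J)+t=I+t.
\]
So $I+t$ and $J+t$ are directly linked by $L+2t$.

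For the consequence, take a chain $I=I_0,\dots,I_r=J$ with $I_{k-1}$ and $I_k$ directly linked by $L_k\in\mathcal L$. Translating every term by $t$ produces the chain $I_0+t,\dots,I_r+t$, linked step by step by $L_k+2t$. Each $L_k+2t$ lies in $\mathcal L$ because $\mathcal L$ is closed under translation, which is the standing assumption of this section. The new chain has the same length $r$, so its parity is preserved. Therefore, if $I$ and $J$ lie in the same (even) $\mathcal L$-liaison class, then so do $I+t$ and $J+t$; this is the sense of invariance under translation.

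There is no real obstacle. The only point needing care is the sign bookkeeping in the generalized translation identity, in particular that the shift on the linking ideal must be $2t$ and not $t$. This matches the $\mathbb{N}$ computation, where $L=(m+n)+\mathbb{N}$ links $m+\mathbb{N}$ and $n+\mathbb{N}$.
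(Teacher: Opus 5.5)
Your proof is correct and follows essentially the same route as the paper, which writes $J+t=(L-I)+t=(L+t)-I=(L+2t)-(I+t)$ (the last step by Lemma~\ref{lem:colon-basic}(3)) and then translates the whole liaison chain. Your single identity $(H+a)-(K+b)=(H-K)+(a-b)$ just packages those two steps into one check, and your remarks that $L_k+2t\in\mathcal L$ by translation-closure and that the chain length (hence parity) is unchanged make explicit what the paper leaves implicit.
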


\begin{proof}
If $J=L-I$, then
\[
J+t=(L-I)+t=(L+t)-I=(L+2t)-(I+t),
\]
where the last equality follows from Lemma~\ref{lem:colon-basic}(3). Similarly, $I+t=(L+2t)-(J+t)$. The final assertion follows by translating every ideal and every linking ideal along a liaison chain.
\end{proof}

From this point on, liaison and even liaison classes will be considered modulo translation unless explicitly stated otherwise.

\begin{corollary}\label{cor:normalization-liaison}
Let $I,J$ be relative ideals of $S$. Then $I$ and $J$ belong to the same $\mathcal{L}$-liaison class (respectively, even $\mathcal{L}$-liaison class) if and only if their normalizations $I^{\mathrm{nor}}$ and $J^{\mathrm{nor}}$ do.
\end{corollary}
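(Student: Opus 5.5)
The plan is to derive the corollary directly from Proposition~\ref{prop:translation-link}, using the fact that $I^{\mathrm{nor}}=I+t$ with $t=-\min(I)$. The statement is an ``if and only if'' about liaison classes in the literal sense, that is, before passing to classes modulo translation. It therefore suffices to show that, for any relative ideals $I,J$ and any integers $s,t$, $I$ and $J$ lie in the same $\mathcal L$-liaison class (respectively, even class) if and only if $I+s$ and $J+t$ do.

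First I reduce to a single common translation. By Proposition~\ref{prop:translation-link}, if $I_0=I,\dots,I_r=J$ is a linkage chain with linking ideals $L_k\in\mathcal L$, then $I_0+t,\dots,I_r+t$ is a chain of the same length $r$. Its linking ideals are $L_k+2t$, and these lie in $\mathcal L$ because $\mathcal L$ is closed under translation. So being in the same class (and in the same even class) is preserved when $I$ and $J$ are translated by the same $t$. The converse follows by translating back by $-t$.

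The only real point is to handle different translations of $I$ and $J$. Here I show that $I$ and $I+u$ always lie in the same even liaison class, so that liaison classes are unions of translation orbits. Concretely:
\begin{itemize}
\item Set $L=S-I$ (up to choosing $L$ in $\mathcal L$), or more intrinsically take any direct link of $I$ by some $L\in\mathcal L$.
\item Relink $J'=L-I$ by $L+u$, producing $(L+u)-J'=(L-J')+u=I+u$.
\end{itemize}
This works only if $I$ is linkable at all, which may fail in general.

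I therefore expect the honest reading to be the one the paper adopts: after Proposition~\ref{prop:translation-link}, classes are ``considered modulo translation''. Under that reading, ``$I$ and $J$ belong to the same class'' means that some translates $I+s$ and $J+t$ do. The corollary then follows at once, since $I^{\mathrm{nor}}$ is a translate of $I$ and $J^{\mathrm{nor}}$ is a translate of $J$. Any translate of $I$ is a translate of $I^{\mathrm{nor}}$, and similarly for $J$, so the existential conditions for the two pairs coincide. Parity is preserved because the chains are unchanged in length.

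The main obstacle is thus not computational but interpretive. One must make explicit that equivalence is taken modulo translation, and invoke Proposition~\ref{prop:translation-link} together with translation-closure of $\mathcal L$ to transport chains while keeping their length.
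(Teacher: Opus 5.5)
Your final argument is correct and is essentially the paper's own proof. You read classes modulo translation, as the paper stipulates just before the corollary, and then the claim follows from Proposition~\ref{prop:translation-link} together with the fact that $I^{\mathrm{nor}}$ and $J^{\mathrm{nor}}$ are translates of $I$ and $J$, with chain lengths (hence parity) unchanged.

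Your caveat about the literal reading is also well founded. The ideal $I=(0,2)+S$ in $S=\langle 5,6,8\rangle$ is not $S$-reflexive, so it admits no principal direct link. Hence $I$ and $I+1$ have the same normalization but lie in different literal $\mathcal{L}_{\mathrm{pr}}(S)$-liaison classes, and the translation convention is genuinely needed for the ``if'' direction. The ``only if'' direction does hold literally: apply your relinking trick at both ends of a chain of positive length.
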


\begin{proof}
By Proposition~\ref{prop:translation-link}, liaison and even liaison are invariant under translation. Since normalization is a translation of the form $I^{\mathrm{nor}}=I-\min(I)$, the claim follows.
\end{proof}

In view of Corollary~\ref{cor:normalization-liaison}, liaison and even liaison can be studied on normalized representatives:
for every relative ideal $I$ the ideals $I$ and $I^{\mathrm{nor}}$ lie in the same $\mathcal{L}$-liaison class (and in the same even class).

\subsection{Principal links}

Consider the principal linking family
\[
\mathcal{L}_{\mathrm{pr}}(S) := \{\, a+S \mid a\in\mathbb{Z}\,\}.
\]
The family $\mathcal{L}_{\mathrm{pr}}(S)$ is closed under translation.

By Corollary~\ref{cor:normalization-liaison}, liaison and even liaison are invariant under translation. Throughout this subsection we freely replace relative ideals by their normalized representatives whenever convenient.

Principal links provide the simplest linkage mechanism, in the spirit of complete-intersection linkage in the classical ring-theoretic setting \cite{MN02}.

For a relative ideal $I$ of $S$, we write
\[
I^{*} := S-I = \{\, z\in\mathbb{Z} \mid z+I \subseteq S \,\}.
\]
Thus $I^{*}$ is the $S$-dual of $I$ in the sense of \cite{HWRS06}.

\begin{lemma}\label{lem:CI-residual}
Let $a\in\mathbb{Z}$ and let $I$ be a relative ideal of $S$. Then
\[
(a+S)-I \;=\; a + (S-I) \;=\; a + I^{*}.
\]
\end{lemma}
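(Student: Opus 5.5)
The plan is to prove the identity $(a+S)-I = a + (S-I)$ by double inclusion, working directly from the definition of the colon $H-K=\{z\in\mathbb{Z}\mid z+K\subseteq H\}$. The second equality, $a+(S-I)=a+I^{*}$, is just the definition of $I^{*}$, so only the first needs an argument.

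There is one small point to settle first. The set $a+S$ is a relative ideal of $S$: it is nonempty, it is stable under adding $S$, and $(-a+c)+(a+S)\subseteq S$ for any $c\in S$ with $c\ge -a$. This makes the expression $(a+S)-I$ fall within the setting of the paper. The identity itself is purely set-theoretic, however, and is valid for any subsets.

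For the inclusion $(a+S)-I\subseteq a+(S-I)$, take $z$ with $z+I\subseteq a+S$. Subtracting $a$ elementwise gives $(z-a)+I\subseteq S$, so $z-a\in S-I$ and hence $z=a+(z-a)\in a+(S-I)$. Conversely, if $z=a+w$ with $w+I\subseteq S$, then $z+I=a+(w+I)\subseteq a+S$, so $z\in(a+S)-I$.

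The key observation is that translation by $a$ is a bijection of $\mathbb{Z}$ preserving inclusions, so $x+I\subseteq a+S$ is equivalent to $(x-a)+I\subseteq S$. Alternatively, one could deduce the statement from Lemma~\ref{lem:colon-basic}(3): writing $(a+S)-I=(a+S)-((I-a)+a)=S-(I-a)$ and then checking $S-(I-a)=a+(S-I)$ directly. I will use the elementary elementwise argument above. No step is a real obstacle; the only care needed is keeping the signs of the translations consistent.
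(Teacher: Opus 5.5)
Your argument is correct and is essentially the paper's proof: both reduce $z+I\subseteq a+S$ to $(z-a)+I\subseteq S$ by translating by $-a$, which gives $z-a\in S-I$. One small aside, not needed for the identity: in your check that $a+S$ is a relative ideal, the translating element $c-a$ must itself lie in $S$, so you should choose $c\in S$ with $c-a\in S$ (e.g.\ $c$ sufficiently large) rather than merely requiring $c\ge -a$.
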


\begin{proof}
Let $z\in\mathbb{Z}$. Then $z\in (a+S)-I$ if and only if $z+I\subseteq a+S$, which is equivalent to $(z-a)+I\subseteq S$, i.e.\ $z-a\in S-I$.
\end{proof}

\begin{definition}
A relative ideal $I$ of $S$ is called $S$-reflexive if
\[
I \;=\; S-(S-I) \;=\; (I^{*})^{*}.
\]
\end{definition}

The next result shows that principal direct linkage determined by $a+S$ is governed by the $S$-dual $I^{*}=S-I$ and by $S$-reflexivity.

\begin{lemma}\label{lem:CI-direct-links}
Let $I$ be a relative ideal of $S$ and let $a\in\mathbb{Z}$. Set $L=a+S$ and $J:=L-I$. Then
\[
J = a + I^{*}.
\]
Moreover, $I$ and $J$ are directly linked by $L$ if and only if $I$ is $S$-reflexive.
\end{lemma}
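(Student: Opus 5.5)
The first assertion follows directly from Lemma~\ref{lem:CI-residual} applied to $L=a+S$: we get $J=(a+S)-I=a+(S-I)=a+I^{*}$. The first of the two linkage equalities, $J=L-I$, therefore holds by the very definition of $J$. So direct linkage of $I$ and $J$ by $L$ is equivalent to the single remaining condition $I=L-J$, and the plan is to compute $L-J$ explicitly.

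For that computation I would combine translation invariance of the colon with Lemma~\ref{lem:CI-residual}. Lemma~\ref{lem:colon-basic}(3) with $t=a$ gives
\[
L-J=(a+S)-(a+I^{*})=S-I^{*}=(I^{*})^{*}.
\]
Alternatively, applying Lemma~\ref{lem:CI-residual} to the relative ideal $J$ gives $(a+S)-J=a+(S-J)$. One then checks that $S-(a+I^{*})=(S-I^{*})-a$, since $z+a+I^{*}\subseteq S$ if and only if $z+a\in S-I^{*}$; this yields the same formula. Here $I^{*}$ is a relative ideal by Lemma~\ref{lem:colon-basic}(1), so all colons are between relative ideals. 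Its translate $J$ is also a relative ideal, because translation preserves this property, as noted for normalization.

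Hence $I=L-J$ holds if and only if $I=(I^{*})^{*}=S-(S-I)$, which is exactly $S$-reflexivity. Both directions of the equivalence come out simultaneously, since each step is an equality of sets.

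I do not expect a real obstacle. The only point needing care is the sign bookkeeping in the translation identity $(a+S)-(a+I^{*})=S-I^{*}$: the shift $a$ must cancel rather than double. Lemma~\ref{lem:colon-basic}(3) handles this cleanly, because both arguments are translated by the same $t=a$.
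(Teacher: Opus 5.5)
Your proof is correct and follows essentially the same route as the paper: you reduce direct linkage to the single condition $I=L-J$ and compute $L-J=(a+S)-(a+I^{*})=S-I^{*}=(I^{*})^{*}$. The paper justifies the middle equality by invoking Lemma~\ref{lem:CI-residual} twice, which is your alternative computation; your primary route via the translation invariance in Lemma~\ref{lem:colon-basic}(3) is equally valid.
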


\begin{proof}
The equality $J=a+I^{*}$ follows from Lemma~\ref{lem:CI-residual}. By definition, $I$ and $J$ are directly linked by $L$ if and only if $I=L-J$. Using Lemma~\ref{lem:CI-residual} twice, we obtain
\[
L-(L-I) = (a+S)-(a+I^{*}) = S-I^{*} = (I^{*})^{*}.
\]
Thus $I=L-J$ holds if and only if $I=(I^{*})^{*}$.
\end{proof}

\begin{theorem}\label{thm:principal-classes}
Let $S$ be a numerical semigroup and let $I$ be a relative ideal of $S$. Then the $\mathcal{L}_{\mathrm{pr}}(S)$-liaison class of $I$ is completely determined as follows.
\begin{enumerate}
\item If $I$ is not $S$-reflexive, then its $\mathcal{L}_{\mathrm{pr}}(S)$-liaison class consists only of $I$.
\item If $I$ is $S$-reflexive, then its $\mathcal{L}_{\mathrm{pr}}(S)$-liaison class modulo translation is represented by
\[
I^{\mathrm{nor}}
\quad\text{and}\quad
(I^{*})^{\mathrm{nor}},
\]
which need not be distinct.
\end{enumerate}
In particular, modulo translation, every $\mathcal{L}_{\mathrm{pr}}(S)$-liaison class has cardinality at most two.
\end{theorem}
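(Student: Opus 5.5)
The plan is to reduce everything to Lemma~\ref{lem:CI-direct-links}. Suppose $I$ and $J$ are directly linked by some $L\in\mathcal{L}_{\mathrm{pr}}(S)$, so $L=a+S$ with $a\in\mathbb{Z}$. That lemma forces $J=L-I=a+I^{*}$, and the second linkage equality $I=L-J$ holds exactly when $I$ is $S$-reflexive. So a direct link out of $I$ exists only when $I$ is $S$-reflexive, and in that case every direct partner of $I$ is a translate of $I^{*}$. Conversely, if $I$ is $S$-reflexive, then every translate $a+I^{*}$ is directly linked to $I$.

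For (1), assume $I$ is not $S$-reflexive. Take a liaison chain $I=I_0,I_1,\dots,I_r=J$. If $r\ge 1$, then $I_0$ and $I_1$ are directly linked by some $a+S$. By the first paragraph, $I_0$ would then be $S$-reflexive, a contradiction. Hence $r=0$ and $J=I$, so the class is $\{I\}$. The statement says the class is only $I$, not only translates of $I$, and this argument gives exactly that.

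For (2), assume $I$ is $S$-reflexive. The key step is to show that $I^{*}$, and every translate of it, is again $S$-reflexive. Applying Lemma~\ref{lem:colon-basic}(4) with $H=S$ and $K=I^{*}$ gives $I^{*}\subseteq ((I^{*})^{*})^{*}$. Since $(I^{*})^{*}=I$, the right-hand side equals $I^{*}$, so $I^{*}=((I^{*})^{*})^{*}$, i.e.\ $I^{*}$ is $S$-reflexive. The same conclusion also follows from the order-reversing property Lemma~\ref{lem:colon-basic}(2) applied to $I\subseteq (I^{*})^{*}$, which gives $((I^{*})^{*})^{*}\subseteq I^{*}$ and hence the triple-dual identity in general. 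Translation preserves reflexivity, because $(b+K)^{*}=-b+K^{*}$ by Lemma~\ref{lem:colon-basic}(3) (equivalently, Lemma~\ref{lem:CI-residual}), so $((b+K)^{*})^{*}=b+(K^{*})^{*}$.

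Now induct along a chain $I=I_0,\dots,I_r$. I claim each $I_k$ is $S$-reflexive and is a translate of either $I$ or $I^{*}$. The base case $k=0$ is the hypothesis. For the inductive step, suppose $I_{k-1}$ is reflexive and equals $b+I$ or $b+I^{*}$. By the first paragraph, $I_k=a+I_{k-1}^{*}$ for some $a$. If $I_{k-1}=b+I$, then $I_k=(a-b)+I^{*}$. If $I_{k-1}=b+I^{*}$, then $I_k=(a-b)+(I^{*})^{*}=(a-b)+I$ by reflexivity. In either case $I_k$ is reflexive by the previous paragraph. Conversely, both $I$ and $I^{*}$ are reached by chains of length at most one, and all their translates lie in the class by Proposition~\ref{prop:translation-link}. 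By the normalization lemma, the classes modulo translation are therefore represented by $I^{\mathrm{nor}}$ and $(I^{*})^{\mathrm{nor}}$; these coincide when $I^{*}$ is a translate of $I$. The cardinality bound follows.

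The main obstacle is the closure step in (2), namely that reflexivity propagates to $I^{*}$ and to translates. The argument above settles it using only Lemma~\ref{lem:colon-basic}(2)–(4), so the remaining steps are routine.
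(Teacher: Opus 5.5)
Your proof is correct and follows the paper's route: every principal direct link is reduced to Lemma~\ref{lem:CI-direct-links}, and $(I^{*})^{*}=I$ closes off the class. Your induction along chains spells out what the paper compresses into ``no further liaison classes modulo translation can appear.'' The step you flag as the main obstacle, namely the reflexivity of $I^{*}$ and its translates, is actually dispensable: your computation of $I_k$ uses only $(I^{*})^{*}=I$, and any ideal in a principal direct link is automatically $S$-reflexive by Lemma~\ref{lem:CI-direct-links} with the two ideals interchanged.
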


\begin{proof}
If $I$ is not $S$-reflexive, then by Lemma~\ref{lem:CI-direct-links} no principal direct link involving $I$ can exist.
Hence its liaison class reduces to $\{I\}$.

Assume now that $I$ is $S$-reflexive. By Lemma~\ref{lem:CI-direct-links}, $I$ is directly linked to $a+I^{*}$ for some $a\in\mathbb{Z}$. Passing to normalized representatives shows that $I^{\mathrm{nor}}$ and $(I^{*})^{\mathrm{nor}}$ represent the principal liaison class modulo translation. Since $(I^{*})^{*}=I$, no further liaison classes modulo translation can appear.
\end{proof}

We now illustrate both cases of Theorem~\ref{thm:principal-classes}.

\begin{example}
Let $S=\langle 5,6,8\rangle$.

Consider the normalized relative ideal
\[
I := (0,1,3) + S.
\]
A direct computation shows that $I^{*}=(5,12,14)+S$, hence $(I^{*})^{*}=I$ and $I$ is $S$-reflexive. By Theorem~\ref{thm:principal-classes}, the $\mathcal{L}_{\mathrm{pr}}(S)$-liaison class of $I$ modulo translation consists of two elements, represented by $I$ and $I^{*}$.

By contrast, for
\[
J := (0,2) + S,
\]
one checks that $(J^{*})^{*}\neq J$, so $J$ is not $S$-reflexive. Again by Theorem~\ref{thm:principal-classes}, the $\mathcal{L}_{\mathrm{pr}}(S)$-liaison class of $J$ is a singleton.

In particular, $I$ and $J$ belong to different $\mathcal{L}_{\mathrm{pr}}(S)$-liaison classes.
\end{example}

This example also shows that even $\mathcal{L}_{\mathrm{pr}}(S)$-liaison can be strictly finer than $\mathcal{L}_{\mathrm{pr}}(S)$-liaison.

\begin{remark}
In the reflexive case, it may happen that $(I^{*})^{\mathrm{nor}} = I^{\mathrm{nor}}$. In this situation the principal direct link collapses modulo translation, and the liaison class of $I$ remains a singleton.
\end{remark}

\begin{example}
Let $S=\langle 5,6,8\rangle$ and consider the relative ideal
$I := 5+S$.

Then $I$ is $S$-reflexive and self-dual up to translation.
Indeed,
\[
I^* = S-(5+S) = (-5)+S,
\]
so $I^*$ is a translate of $I$. Moreover,
\[
(I^*)^* = S-((-5)+S) = 5+S = I,
\]
showing that $I$ is $S$-reflexive.

Since $I^*$ is a translate of $I$, we have
\[
(I^*)^{\mathrm{nor}} = I^{\mathrm{nor}},
\]
and therefore the principal liaison class of $I$ collapses to a singleton, even though the corresponding direct link exists.
\end{example}

\subsection{Canonical links}

We define the canonical ideal of $S$ by
\[
K(S) := \{\,\operatorname{F}(S) - z \mid z\in \mathbb{Z}\setminus S \,\},
\]
where $\operatorname{F}(S)$ is the Frobenius number of $S$.
In general, $K(S)$ is a relative ideal of $S$.

More generally, for $a\in \mathbb{Z}$ we set
\[
K_a(S) := \{\,\operatorname{F}(S)+a - z \mid z\in \mathbb{Z}\setminus S \,\},
\]
the $a$-shifted canonical ideal of $S$ (see, e.g.\ \cite{GSO19}).

Consider the family
\[
\mathcal{L}_{\mathrm{can}}(S) := \{\, a+K(S) \mid a\in\mathbb{Z}\,\}
= \{\, K_a(S) \mid a\in\mathbb{Z}\,\}.
\]
The family $\mathcal{L}_{\mathrm{can}}(S)$ is closed under translation. Canonical links are intended as the semigroup-theoretic analogue of Gorenstein linkage in the classical ring-theoretic setting \cite{MN02}.

Recall that $S$ is said to be symmetric if for every $z\in\mathbb{Z}$, exactly one of $z$ and $\operatorname{F}(S)-z$ lies in $S$.

\begin{proposition}
Let $S$ be a numerical semigroup. Then the following are equivalent:
\begin{enumerate}
\item $S$ is symmetric;
\item $K(S)=S$;
\item $\mathcal{L}_{\mathrm{pr}}(S)=\mathcal{L}_{\mathrm{can}}(S)$.
\end{enumerate}
\end{proposition}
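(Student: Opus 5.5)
I will prove the cycle (1)$\Rightarrow$(2)$\Rightarrow$(3)$\Rightarrow$(1), working directly from the definition $K(S)=\{\operatorname{F}(S)-z\mid z\in\mathbb{Z}\setminus S\}$. The basic reformulation is that $x\in K(S)$ if and only if $\operatorname{F}(S)-x\notin S$.

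For (1)$\Rightarrow$(2), symmetry says that $\operatorname{F}(S)-x\notin S$ holds exactly when $x\in S$. Hence $K(S)=S$ at once. For (2)$\Rightarrow$(3), if $K(S)=S$ then $a+K(S)=a+S$ for every $a\in\mathbb{Z}$, so the two families coincide.

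For (3)$\Rightarrow$(2), equality of the families gives in particular $K(S)\in\mathcal{L}_{\mathrm{pr}}(S)$, so $K(S)=a+S$ for some $a\in\mathbb{Z}$. I will pin down $a$ using two elementary observations. First, $0\in K(S)$ because $\operatorname{F}(S)\notin S$. Second, every negative integer lies in $\mathbb{Z}\setminus S$, so $\operatorname{F}(S)-z$ for $z<0$ shows that every integer greater than $\operatorname{F}(S)$ lies in $K(S)$.

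The key extra fact is that $\min K(S)=0$. Take $x<0$. Then $\operatorname{F}(S)-x>\operatorname{F}(S)$, so $\operatorname{F}(S)-x\in S$, and therefore $x\notin K(S)$. Since $\min(a+S)=a$, this forces $a=0$, and so $K(S)=S$.

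For (2)$\Rightarrow$(1), take $z\in\mathbb{Z}$. If $z\notin S$, then $\operatorname{F}(S)-z\in K(S)=S$, so at least one of $z$ and $\operatorname{F}(S)-z$ lies in $S$. If both $z$ and $\operatorname{F}(S)-z$ were in $S$, their sum $\operatorname{F}(S)$ would lie in $S$, which is a contradiction. So exactly one of them lies in $S$, which is symmetry.

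The only step needing care is (3)$\Rightarrow$(2), namely identifying the translate $a$; normalizing via $\min K(S)=0$ handles it. The convention $S\neq\mathbb{N}$ is not really needed: for $S=\mathbb{N}$ we have $\operatorname{F}=-1$ and $K(\mathbb{N})=\mathbb{N}$, and the same argument works verbatim.
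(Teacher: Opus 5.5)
Your proof is correct and follows the paper's route: (2)$\Rightarrow$(3) is immediate, and (3)$\Rightarrow$(2) is settled by comparing minima of a translate (you write $K(S)=a+S$, the paper writes $S=a+K(S)$, which is equivalent). The differences are only in detail: you prove (1)$\Leftrightarrow$(2) from the definitions where the paper cites it as standard, and you explicitly check $\min K(S)=0$, which the paper's ``$0\in K(S)$ and $\min S=0$'' tacitly needs, since $0\in K(S)$ alone only gives $a\ge 0$.
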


\begin{proof}
The equivalence (1)$\Leftrightarrow$(2) is standard; see \cite[Exercise~4.22]{RG09}.

Assume (2). Then
\[
\mathcal{L}_{\mathrm{can}}(S)
=\{a+K(S)\mid a\in\mathbb{Z}\}
=\{a+S\mid a\in\mathbb{Z}\}
=\mathcal{L}_{\mathrm{pr}}(S),
\]
hence (3).

Conversely, assume (3). Since $S\in\mathcal{L}_{\mathrm{pr}}(S)$, we also have $S\in\mathcal{L}_{\mathrm{can}}(S)$, so $S=a+K(S)$ for some $a\in\mathbb{Z}$. As $0\in K(S)$ and $\min S=0$, it follows that $a=0$ and hence $S=K(S)$.
\end{proof}

As in the principal case, canonical linkage is governed by a duality operator.

\begin{lemma}
Let $a\in\mathbb{Z}$ and let $I$ be a relative ideal of $S$. Then
\[
(a+K(S)) - I \;=\; a + (K(S)-I).
\]
\end{lemma}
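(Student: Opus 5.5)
This is the canonical analogue of Lemma~\ref{lem:CI-residual}, and I would prove it the same way: a direct double-inclusion argument at the level of individual integers. Only two facts are used, namely that translation by $a$ is a bijection of $\mathbb{Z}$ and that $x+I\subseteq y+A$ is equivalent to $(x-a)+I\subseteq (y-a)+A$. Nothing specific to $K(S)$ enters, so the argument works verbatim with $K(S)$ replaced by any relative ideal $H$; that is, $(a+H)-I=a+(H-I)$.

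Fix $z\in\mathbb{Z}$. By the definition of the colon,
\[
z\in (a+K(S))-I \iff z+I\subseteq a+K(S).
\]
Subtracting $a$ from every element, this containment is equivalent to $(z-a)+I\subseteq K(S)$. By definition, that says $z-a\in K(S)-I$, which in turn is equivalent to $z\in a+(K(S)-I)$. Chaining these equivalences gives equality of the two sets.

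If one prefers to use earlier results, Lemma~\ref{lem:colon-basic}(3) offers an alternative route. Write $(a+K(S))-I=(a+K(S))-((I-a)+a)$ and apply the lemma to get $K(S)-(I-a)$. It then remains to check that $K(S)-(I-a)=a+(K(S)-I)$, which again comes down to the same shift computation. The direct argument is therefore cleaner.

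There is no real obstacle here. The only point needing care is that the translation acts on the ideal $K(S)$ being divided into, not on $I$, so the shift appears with a plus sign and not a minus. This matches Lemma~\ref{lem:CI-residual}, and, with $K_a(S)=a+K(S)$, it shows that $\mathrm{Res}_{K_a(S)}(I)=a+(K(S)-I)$.
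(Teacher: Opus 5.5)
Your proof is correct and is essentially the paper's own argument, namely the chain of equivalences $z+I\subseteq a+K(S)\iff (z-a)+I\subseteq K(S)\iff z-a\in K(S)-I$. Your remark that nothing specific to $K(S)$ is used, so that $(a+H)-I=a+(H-I)$ for any relative ideal $H$, is also accurate; it explains why the paper's principal and canonical versions have identical proofs.
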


\begin{proof}
Let $z\in\mathbb{Z}$. Then $z\in (a+K(S))-I$ if and only if $z+I\subseteq a+K(S)$, which is equivalent to $(z-a)+I\subseteq K(S)$, i.e.\ $z-a\in K(S)-I$.
\end{proof}

For a relative ideal \(I\) of \(S\), we set
\[
I^{K}:=K(S)-I\qquad \text{and}\qquad
I^\vee:=K(S)-(K(S)-I)=(I^{K})^{K}.
\]

\begin{definition}
A relative ideal $I$ of $S$ is called $K(S)$-reflexive if $I = I^{\vee}$.
\end{definition}

\begin{lemma}\label{lem:K-direct-links}
Let $I$ be a relative ideal of $S$ and let $a\in\mathbb{Z}$. Set $L=a+K(S)$ and $J:=L-I$.
Then
\[
J = a + I^{K}.
\]
Moreover, $I$ and $J$ are directly linked by $L$ if and only if $I$ is $K(S)$-reflexive.
\end{lemma}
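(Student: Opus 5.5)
The plan is to follow the proof of Lemma~\ref{lem:CI-direct-links} almost verbatim, with $K(S)$ in place of $S$. The only inputs are the preceding lemma, which gives $(a+K(S))-I=a+(K(S)-I)$, and the translation invariance of the colon from Lemma~\ref{lem:colon-basic}(3).

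First, set $L=a+K(S)$. The preceding lemma gives at once
\[
J=L-I=(a+K(S))-I=a+(K(S)-I)=a+I^{K},
\]
which is the first claim.

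For the equivalence, recall the definition of direct linkage: it requires both $J=L-I$ and $I=L-J$. The first equality holds by the definition of $J$, so $I$ and $J$ are directly linked by $L$ exactly when $I=L-J$. I will compute $L-J$ by writing both arguments as translates by $a$ and cancelling the shift with Lemma~\ref{lem:colon-basic}(3):
\[
L-J=(a+K(S))-(a+I^{K})=K(S)-I^{K}=(I^{K})^{K}=I^{\vee}.
\]
Here I use that $I^{K}$ is a relative ideal, by Lemma~\ref{lem:colon-basic}(1), so the lemma applies. Alternatively, the preceding lemma can be applied once more, after moving the translate from the second argument.

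The condition $I=L-J$ is therefore equivalent to $I=I^{\vee}$, which is precisely $K(S)$-reflexivity. I expect no real obstacle. The only point needing care is the justification of the cancellation step. The identity $(H+t)-(K+t)=H-K$ is exactly Lemma~\ref{lem:colon-basic}(3), and it uses only that $K(S)$ and $I^{K}$ are relative ideals, which the paper already records.
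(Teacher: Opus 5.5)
Your proof is correct and takes the same route as the paper, which simply transposes the proof of Lemma~\ref{lem:CI-direct-links} by computing $L-J=(a+K(S))-(a+I^{K})=K(S)-I^{K}=I^{\vee}$, exactly as you do. Your explicit appeal to Lemma~\ref{lem:colon-basic}(3) for cancelling the shift $a$ names the fact that the paper's chain of equalities actually uses, where the paper instead says it applies the residual lemma twice.
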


\begin{proof}
The proof is identical to that of Lemma~\ref{lem:CI-direct-links}, replacing $S$ by $K(S)$ and $I^{*}$ by $I^{K}$.
\end{proof}

\begin{theorem}\label{thm:canonical-classes}
Let $S$ be a numerical semigroup and let $I$ be a relative ideal of $S$. Then the $\mathcal{L}_{\mathrm{can}}(S)$-liaison class of $I$ is completely determined as follows.
\begin{enumerate}
\item If $I$ is not $K(S)$-reflexive, then its $\mathcal{L}_{\mathrm{can}}(S)$-liaison class of a single translation class.
\item If $I$ is $K(S)$-reflexive, then its $\mathcal{L}_{\mathrm{can}}(S)$-liaison class modulo translation
is represented by
\[
I^{\mathrm{nor}} \quad\text{and}\quad  (I^{K})^{\mathrm{nor}},
\]
which need not be distinct.
\end{enumerate}
In particular, modulo translation, every $\mathcal{L}_{\mathrm{can}}(S)$-liaison class has cardinality at most two.
\end{theorem}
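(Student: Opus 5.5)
The plan is to mirror the proof of Theorem~\ref{thm:principal-classes}, using Lemma~\ref{lem:K-direct-links} in place of Lemma~\ref{lem:CI-direct-links}. Every linking ideal in $\mathcal{L}_{\mathrm{can}}(S)$ has the form $L=a+K(S)$. By Lemma~\ref{lem:K-direct-links}, any ideal $J$ directly linked to $I$ by such an $L$ must equal $L-I=a+I^{K}$. Moreover, such a link exists precisely when $I=I^{\vee}$. So the whole argument reduces to understanding one step of linkage, and then iterating it along a chain $I=I_0,I_1,\dots,I_r$.

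\emph{Case (1).} Assume $I$ is not $K(S)$-reflexive. Then by Lemma~\ref{lem:K-direct-links} there is no $J$ and no $a$ with $I$ and $J$ directly linked by $a+K(S)$. Any chain starting at $I_0=I$ therefore has $r=0$, and the class is $\{I\}$. Modulo translation this is the single translation class of $I^{\mathrm{nor}}$. Translates $I+t$ are also non-reflexive, because $(I+t)^{K}=I^{K}-t$ by Lemma~\ref{lem:colon-basic}(3), whence $(I+t)^{\vee}=I^{\vee}+t$.

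\emph{Case (2).} Assume $I=I^{\vee}$. First, $I$ is directly linked to $I^{K}$ by $K(S)$ itself (take $a=0$), so both $I^{\mathrm{nor}}$ and $(I^{K})^{\mathrm{nor}}$ lie in the class. To show nothing else appears, I will prove by induction along a chain that each $I_k$ is a translate of either $I$ or $I^{K}$, and that each $I_k$ is $K(S)$-reflexive.

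The key auxiliary fact is that $I^{K}$ is $K(S)$-reflexive. I would prove it with the standard triple-dual identity $((J^{K})^{K})^{K}=J^{K}$, which follows from Lemma~\ref{lem:colon-basic}(2),(4). Part (4) gives $I^{K}\subseteq (I^{K})^{\vee}$. Applying the antitone map $X\mapsto K(S)-X$ from part (2) to $I\subseteq I^{\vee}$ gives the reverse inclusion. (Here it is in fact immediate, since $(I^{K})^{\vee}=(I^{\vee})^{K}=I^{K}$.)

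For the inductive step, suppose $I_k=I+t$ or $I_k=I^{K}+t$. Then $I_{k+1}=a+I_k^{K}$ for some $a$. Using $(X+t)^{K}=X^{K}-t$ together with $(I^{K})^{K}=I^{\vee}=I$, this gives $I_{k+1}=I^{K}+(a-t)$ or $I_{k+1}=I+(a-t)$, respectively. Reflexivity is preserved under translation, so the induction carries through. Normalizing, every member of the class has normalization $I^{\mathrm{nor}}$ or $(I^{K})^{\mathrm{nor}}$, and these two may coincide. The final cardinality claim follows immediately.

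The only step needing care is the translation bookkeeping $(X+t)^{K}=X^{K}-t$ and the preservation of reflexivity under translation. I expect this to be the main, though mild, obstacle: it requires a direct check that $z+(X+t)\subseteq K(S)$ if and only if $(z+t)+X\subseteq K(S)$.
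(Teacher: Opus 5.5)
Your proposal is correct and follows the same route as the paper: both reduce everything to Lemma~\ref{lem:K-direct-links}, and your induction along a chain, which alternates between translates of $I$ and translates of $I^{K}$, is exactly what the paper compresses into the remark $(I^{K})^{K}=I^{\vee}=I$. Your extra bookkeeping (the identity $(X+t)^{K}=X^{K}-t$, and the fact that translates of a non-reflexive ideal stay non-reflexive) spells out what the paper leaves implicit; the separate reflexivity check for $I^{K}$ is harmless but not needed, since a direct link already forces both linked ideals to be $K(S)$-reflexive.
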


\begin{proof}
If $I$ is not $K(S)$-reflexive, then by Lemma~\ref{lem:K-direct-links} no canonical direct link involving $I$ can exist, and its liaison class reduces to $\{I\}$.

Assume now that $I$ is $K(S)$-reflexive. By Lemma~\ref{lem:K-direct-links}, $I$ is directly linked to $a+I^{K}$ for some $a\in\mathbb{Z}$. Passing to normalized representatives shows that $I^{\mathrm{nor}}$ and $(I^{K})^{\mathrm{nor}}$ represent the canonical liaison class modulo translation. Since $(I^{K})^{K}=I^\vee=I$, no further liaison classes modulo translation can appear.
\end{proof}

In particular, for every numerical semigroup \(S\), the ideals \(S\) and \(K(S)\) always belong to the same \(\mathcal{L}_{\mathrm{can}}(S)\)-liaison class. Indeed, they are directly linked by \(K(S)\), since
\[
K(S)-S = K(S)
\quad\text{and}\quad
K(S)-K(S)=S.
\]
Consequently, if \(S\) is not symmetric, this canonical liaison class is nontrivial. This shows that, in the canonical setting, the canonical ideal \(K(S)\) plays the role of a universal dual object: whenever \(S\) is not symmetric, \(S\) is canonically linked to \(K(S)\). This already suggests that canonical linkage may behave quite differently from principal linkage. We now illustrate both cases of Theorem~\ref{thm:canonical-classes}.

\begin{example}
Let $S=\langle 5,6,8\rangle$. Then $S$ is not symmetric and its Frobenius number is $\operatorname{F}(S)=9$. The canonical ideal is
\[
K(S)=\{\,9-z\mid z\in\mathbb{Z}\setminus S\,\}=(0,2)+S.
\]

Consider the relative ideal
\[
I:=K(S)=(0,2)+S.
\]

A direct computation shows that $I^* = S-I = (6,8,10)+S$ and 
$(I^*)^*=(0,2,4)+S\neq I$. Hence $I$ is not $S$-reflexive, and by Theorem~\ref{thm:principal-classes}
its $\mathcal L_{\mathrm{pr}}(S)$-liaison class is a singleton.

On the other hand, $K(S)-K(S)=S$ and $K(S)-S=K(S)$, so $S$ and $K(S)$ are directly $\mathcal L_{\mathrm{can}}(S)$-linked.
In particular, the $\mathcal L_{\mathrm{can}}(S)$-liaison class of $I=K(S)$ is nontrivial
and consists of the two elements $S$ and $K(S)$.

This example shows that principal and canonical linkage may behave in a drastically different way: the same ideal can be isolated in the principal theory while belonging to a nontrivial
liaison class in the canonical theory.
\end{example}

Both principal and canonical linkage are governed by involutive duality operators. More precisely, the assignments \(I\mapsto I^{*}=S-I\) and \(I\mapsto I^{K}=K(S)-I\) define involutions on the sets of \(S\)-reflexive and \(K(S)\)-reflexive ideals of \(S\), respectively, up to translation. As a consequence, in each of these two settings liaison classes are necessarily rigid, consisting of at most two elements modulo translation.

Therefore, in these settings, semigroup liaison reduces to the study of these involutive duality operators on the corresponding reflexive ideals.

\begin{remark}
If one allows both principal and canonical linking ideals, that is,
\[
\mathcal L=\{a+S,\;a+K(S)\mid a\in\mathbb Z\},
\]
then an ideal may admit more than one direct link. In this mixed setting, even liaison need not be trivial, as compositions of principal and canonical links may produce new ideals.
\end{remark}

\begin{example}
Let $S=\langle 7,9,10,12\rangle$. Then $\operatorname{F}(S)=15$, and the canonical ideal is
\[
K(S)=\{15-z \mid z \notin S\} = (0,2,4)+S.
\]

Consider the normalized relative ideal
\[
I=(0,1,3,5)+S.
\]

A direct computation gives
\[
I^* = S - I = (9,17,20,22)+S
\]
and
\[
I^K = K(S) - I = (9,11,13,17)+S.
\]
Moreover,
\[
(I^*)^* = I,
\qquad
(I^*)^K = (0,1,2,3,4,5)+S,
\qquad
((I^*)^K)^K = I^*.
\]

Therefore $I$ and $I^*$ are directly linked by a principal ideal, and $I^*$ and $(I^*)^K$ are directly linked by a canonical ideal. Hence
\[
I \;\sim\; I^* \;\sim\; (I^*)^K
\]
is a mixed liaison chain of length $2$.

Since
\[
I=(0,1,3,5)+S
\qquad\text{and}\qquad
(I^*)^K=(0,1,2,3,4,5)+S
\]
are not translates of one another, they belong to the same mixed liaison class but to different translation classes.

Thus, unlike the pure principal or pure canonical settings, in the mixed setting the even liaison class modulo translation of a relative ideal need not reduce to a singleton.
\end{example}

This suggests that the mixed iteration of these dualities should be studied through the orbits they generate, rather than identified a priori with mixed liaison classes.

Further structural properties of the ideals arising from the mixed iteration of the dualities \(I\mapsto I^*\) and \(I\mapsto I^K\) appear to be closely related to almost symmetric numerical semigroups; see, for instance, \cite{GSO19}. These connections will be explored elsewhere.

\medskip
\paragraph{\bf Acknowledgements.}
Microsoft Copilot was used to assist with English language editing and stylistic revision. The author takes full responsibility for the final content of the manuscript. The author also thanks Prof. S. Estrada for pointing out some notational and conceptual inaccuracies in the subsection on canonical links.


\end{document}